\documentclass{amsart}

\usepackage{amsmath,amsxtra,amsthm,amssymb,amsfonts,graphicx,mathtools} 

\usepackage[pdfusetitle,bookmarks=true,bookmarksnumbered=false,bookmarksopen=false,breaklinks=false,pdfborder={0 0 0},backref=false,colorlinks=true]{hyperref}

\theoremstyle{plain}

\newtheorem{thm}{Theorem}[section]

\theoremstyle{definition}

\newtheorem{exa}[thm]{Example}

\def\l{\left}
\def\r{\right}

\def\as{\coloneqq}

\def\nat{\mathbb{N}}

\def\rls{\mathbb{R}}

\def\phi{\varphi}
\def\eps{\varepsilon}

\def\half{\frac{1}{2}}
\newcommand{\hlf}[1]{\frac{#1}{2}}
\newcommand{\rec}[1]{\frac{1}{#1}} 
\newcommand{\nnorm}[1]{{\l\vert\kern-0.25ex\l\vert\kern-0.25ex\l\vert #1 \r\vert\kern-0.25ex\r\vert\kern-0.25ex\r\vert}}

\def\col{\colon}
\def\ol{\overline}
\def\un{\underline{n}}
\def\on{\overline{n}}
\renewcommand{\hat}{\widehat}

\def\prob{\mathbb{P}}

\begin{document}
\title[Self-consistency]{The Kaplan--Meier estimator as a limit function of Efron's self-consistency iterations}

\date{\today}
\subjclass[2020]{Primary: 62N02. Secondary: 47H10.}
\keywords{Censored data, cumulative distribution function, fixed point, iterative algorithm, Kaplan--Meier estimator, self-consistent estimator, survival function.}

\author[M. Ba\v{c}\'ak]{Miroslav Ba\v{c}\'ak}
\address{University of Leipzig, Faculty of Medicine, Clinical Trial Centre, Haertelstr. 16--18, 04107 Leipzig, Germany}
\email{miroslav.bacak@zks.uni-leipzig.de}

\begin{abstract}
In 1967 Efron showed that the Kaplan--Meier estimator can be defined as a fixed point of a certain mapping, calling this property ``self-consistency''. He also proposed an iterative algorithm for approximating this fixed point and suggested its convergence. The purpose of the present note is to prove this fact. 
\end{abstract}

\maketitle

\section{Introduction}

The Kaplan--Meier estimator~\cite{kaplan-meier} is undoubtedly one of the most frequently used tools in statistical modelling of time-to-event scenarios. It can be derived in various ways, using for instance counting processes, martingales, product integrals, or (non-parametric) maximum likelihood \cite{andersen-et-al,fleming-harrington,martinussen-scheike}. Yet another very interesting and fruitful aproach was introduced by Efron~\cite{efron} who showed that the Kaplan--Meier estimator coincides with a unique fixed point of a certain mapping, calling this fixed point a \emph{self-consistent estimator}.

To explain the main idea of self-consistency here, let $m\in\nat$ and assume we want to estimate a~probability distribution~$F_X$ based on a finite sample $X_1,\dots, X_m,$ which however is not directly available due to right-censoring represented by random variables $C_1,\dots, C_m.$ That is, we observe only the random variables $T_i\as X_i\wedge C_i,$ which of course have a different distribution~$F_T.$ Let us denote the corresponding survival functions by $S_X$ and $S_T,$ respectively. (We refer the reader to Section~\ref{sec:definitions} for the terminology and notation.) There are three possible situations which can eventuate, when we estimate the distribution~$F_X,$ or equivalently the survival function $S_X,$ at a time $t\in\rls$ based on the censored data.
\begin{itemize}
\item For $t < t_i$ we have $t<x_i,$ since $t_i\le x_i$ by definition. It means that $x_i$ contributes to both~$S_T$ and $S_X.$
 
\item For $t_i \le t$ uncensored, we get $t_i=x_i\le t$ by definition, and hence $x_i$ contributes neither to $S_T$ nor to $S_X.$

\item For $t_i \le t$ censored, one has $x_i > t$ with conditional probability
\begin{equation*}
 \prob\l(x_i > t \mid x_i>t_i  \r) = \frac{S_X(t)}{S_X\l(t_i\r)}.
\end{equation*}
\end{itemize}
An estimate $\hat{S}$ of the survival function $S_X$ should therefore satisfy the following equality
\begin{equation} \label{eq:self}
 \hat{S}(t) = e(t) + \rec{m} \sum_{\substack{t_i\le t \\ \delta_i=0}}\frac{\hat{S}(t)}{\hat{S}\l(t_i\r)},\qquad t\in\rls,
\end{equation}
where
\begin{equation*}
 e(t)\as\rec{m} \#\l\{i\in\{1,\dots,m\}\col t_i > t  \r\}
\end{equation*}
is the empirical survival function and $\delta_1,\dots,\delta_m$ are the indicators of events/censoring. Efron~\cite{efron} called such an estimator \emph{self-consistent.} 

Equation~\eqref{eq:self} is an instance of an abstract fixed point problem $u=Q(u),$ where $Q\col U\to U$ is a mapping on some set $U.$ A~common approach to find a fixed point is to define a sequence of approximations $u_{k+1}\as Q\l(u_k\r),$ where $k\in\nat$ and $u_1\in U$ is a starting point. One wants to then show that $\l(u_k\r)$ converges to a fixed point. In this spirit, Efron~\cite{efron} proposed to approximate a self-consistent estimator is by the following sequence 
\begin{equation} \label{eq:iter}
 \hat{S}_{k+1}(t) \as e(t) + \rec{m} \sum_{\substack{t_i\le t \\ \delta_i=0}}\frac{\hat{S}_k(t)}{\hat{S}_k\l(t_i\r)},\qquad t\in\rls,\;k\in\nat,
\end{equation}
with an arbitrary survival function $\hat{S}_1$ as a starting point. It is easy to verify that $\hat{S}_k$ is a survival function, for every $k\in\nat.$ If the sequence $\l(\hat{S}_k\r)$ converges to some survival function $\hat{S},$ as it was suggested by Efron~\cite{efron}, this limit survival function would be a \emph{self-consistent estimator} in the sense that it satisfies the equation~\eqref{eq:self}. The main goal of the present paper is to establish the convergence of~$\l(\hat{S}_k\r),$ as $k\to\infty.$

The theory of self-consistent estimators has been further developed by a number of authors, most notably by Turnbull~\cite{turnbull}, Tsai--Crowley~\cite{tsai} and Mykland and Ren \cite{mykland-ren}. In particular, in \cite[Theorem 6]{mykland-ren} the authors established the convergence of an iterative algorithm \cite[(2.5)]{mykland-ren}, which is an analog of Efron's iterations~\eqref{eq:iter} for doubly-censored data. This convergence theorem builds upon Wu's celebrated result concerning the convergence of the Expectation-maximization Algorithm~\cite{wu} and requires therefore some additional assumptions.

In their recent paper, Strawderman and Baer \cite[Eq. (2)]{strawderman} claimed that the convergence of the sequence~\eqref{eq:iter} follows without additional assumptions from Rogers' proof of Brouwer's fixed point theorem~\cite{rogers}. However, we see no way of deriving a convergence result from Brouwer's theorem (which is a~purely \emph{existence} statement in fixed point theory), and we do not see what role Rogers' proof of Brouwer's theorem should play here either. Moreover, we do not see how Brouwer's theorem would yield even the existence of a self-consistent
estimator, because if we apply it pointwise, the resulting function will not be a survival function.

In the present note we give a simple self-contained proof of the convergence of the sequence~\eqref{eq:iter}, which as a by product also provides the existence and uniqueness of a self-consistent estimator. In contrast to Mykland and Ren \cite{mykland-ren}, our convergence proof relies merely on elementary properties of the survival function and needs no additional assumptions. On the other hand, it is, like Efron's work~\cite{efron}, restricted to the right-censored data case.

\section{Standard definitions} \label{sec:definitions}

Let us first recall some basic elements of survival analysis. For more details, the reader is referred to the classic monographs \cite{andersen-et-al,fleming-harrington,martinussen-scheike}. Given a cumulative distribution function $F\col\rls\to[0,1],$ that is, a~non-decreasing right-continuous function with
 \begin{equation*}
  \lim_{t\to-\infty} F(t)=0,\qquad \lim_{t\to\infty} F(t)=1,
 \end{equation*}
we define the corresponding \emph{survival function} by $S\as 1-F.$ Note that unlike Efron, we choose to work with the \emph{right-continuous} version of survival functions, but it is of course a formal change only.

Let $m\in\nat$ and consider independent random variables $X_1,\dots,X_m$ representing survival times with distribution $F$ and iid random variables $C_1,\dots,C_m$ representing censoring times. The random variables which one can observe are given by $T_i\as X_i\wedge C_i,$ along with the indicators $\Delta_i$ defined as
\begin{equation*}
\Delta_i \as
 \begin{cases}
  0, &  \textnormal{if } T_i=C_i, \\ 
  1, &  \textnormal{if } T_i=X_i,
 \end{cases}
\end{equation*}
for each $i=1,\dots,m.$ Given a right-censored survival dataset
\begin{equation*}
 \l(t_1,\delta_1\r),\dots,\l(t_m,\delta_m\r),
\end{equation*}
where $t_1\le\cdots \le t_m$ are event/censoring times and $\delta_i\in\{0,1\}$ are  indicators of censoring ($\delta_i=0$), or of an event ($\delta_i=1$), denote
\begin{align*}
  \eta_n & \as \#\l\{i\in\{1,\dots,m\}\col t_i = t_n \text{ and } \delta_i=1  \r\} ,\\  \zeta_n & \as \#\l\{i\in\{1,\dots,m\}\col t_i = t_n \text{ and } \delta_i=0  \r\}.
\end{align*}
One can estimate the underlying survival function using the Kaplan--Meier \cite{kaplan-meier} estimator, given by
\begin{equation*}
 \hat{S}^\textrm{KM}(t)\as \prod_{i\col t_i\le t} \l(1-\frac{\eta_i}{\rho_i}\r),\qquad t\in\rls,
\end{equation*}
where $\eta_i$ stands for the number of events at time~$t_i$ and~$\rho_i$ for the number of individuals at risk at time~$t_i.$

As a matter of fact, neither the Kaplan--Meier estimator nor any other self-consistent estimator are necessarily survival functions in the sense that they vanish at infinity. This technical detail, however, causes no problem.

\section{The convergence theorem and counterexamples}

In this Section we prove our main result on the convergence of the sequence~\eqref{eq:iter} which then directly shows the existence and uniqueness of a self-consistent estimator as a by-product. Recall that the existence and uniqueness of a self-consistent estimator was proved by Efron \cite[Theorem 7.1]{efron}. However, as we demostrate in Example~\ref{exa:counterex} below, the uniqueness claim is slightly inaccurate, because it holds true only till the last observed time~$t_m.$ As a consequence, one cannot expect a reasonable convergence result beyond the last observed time.
\begin{exa}[Uniqueness issues] \label{exa:counterex}
 Let $m=2$ and $t_1<t_2$ with $\delta_1=\delta_2=0.$ Then the survival function which is equal to $1$ on $\l(-\infty,t_2\r)$ and equal to $0$ on $\l[t_2,\infty\r),$ is a self-consistent estimator. Furthermore, any survival function which is equal to $1$ on $\l(-\infty,t_2\r]$ is a self-consistent estimator. The uniqueness of a~self-consistent estimator cannot be therefore guaranteed unless we restrict the time domain to $\l(-\infty,t_2\r);$ see Theorem~\ref{thm:main} below.
 \end{exa}
 
The fact that the limit function of the iterative scheme~\eqref{eq:iter} coincides with the Kaplan--Meier estimator was proved by Efron \cite[Corollary 7.1]{efron}. For the reader's convenience we present this result as part of Theorem~\ref{thm:main} and prove it by an argument very similar to Efron's.

\begin{thm}[Convergence of Efron's self-consistency iterations] \label{thm:main} Given an arbitrary survival function~$\hat{S}_1$ as a starting point of the iterative algorithm in~\eqref{eq:iter}, the sequence $\l(\hat{S}_k\r)$ converges uniformly on $\l(-\infty,t_m\r)$ to a~unique self-consistent estimator, which coincides with the Kaplan--Meier estimator.
\end{thm}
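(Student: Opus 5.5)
The plan is to exploit the fact that, after one iteration, every $\hat S_k$ with $k\ge 2$ is constant on each interval between consecutive distinct observed times. Indeed, for fixed $t$ the right-hand side of~\eqref{eq:iter} depends on $t$ only through $e(t)$, through the index set $\{t_i\le t,\ \delta_i=0\}$, and through the factor $\hat S_k(t)$. Denote the distinct observed times below $t_m$ by $s_1<\dots<s_p$. On $(-\infty,s_1)$ the sum in~\eqref{eq:iter} is empty and $e=1$, so $\hat S_k\equiv 1$ there for all $k\ge 2$. For $t\in[s_j,s_{j+1})$ the recursion reduces to a scalar recursion for $x_k\as \hat S_k(s_j)$, because every term of~\eqref{eq:iter} on that interval is determined by values at the points $s_1,\dots,s_j$. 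Consequently uniform convergence on $(-\infty,t_m)$ is equivalent to convergence at the finitely many points $s_1,\dots,s_p$, and I will prove the latter by induction on $j$.

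For the inductive step, observe that a censored observation with $t_i=s_j$ contributes $\hat S_k(s_j)/\hat S_k(s_j)=1$ to the sum. Writing $\zeta(s_j)$ for the number of censorings at $s_j$, this gives
\begin{equation*}
 x_{k+1}= e(s_j)+\frac{\zeta(s_j)}{m} + c_k\, x_k,\qquad c_k\as \rec{m}\sum_{\substack{t_i<s_j\\ \delta_i=0}} \frac{1}{\hat S_k(t_i)}.
\end{equation*}
By the induction hypothesis $\hat S_k(t_i)\to \hat S(t_i)$ for all $t_i<s_j$. Positivity is automatic after the first step, since $\hat S_k(t_i)\ge e(t_i)>0$ for $t_i<t_m$. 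Hence $c_k\to c_j\as \rec{m}\sum_{t_i<s_j,\,\delta_i=0}1/\hat S(t_i)$. The sequence $(x_k)$ is bounded in $[0,1]$. If $c_j<1$, the standard argument for nonautonomous affine recursions with contraction coefficients $c_k\to c_j<1$ applies: $|x_{k+1}-x^*|\le c_k|x_k-x^*|+|c_k-c_j|\,|x^*|$ with $x^*\as (e(s_j)+\zeta(s_j)/m)/(1-c_j)$. This yields $x_k\to x^*$, and $x^*$ is also the unique solution of the fixed-point equation at $s_j$. Thus convergence, existence, and uniqueness on $(-\infty,t_m)$ all come out of the same induction. (The first iterate requires $\hat S_1(t_i)>0$ at censored $t_i$ for~\eqref{eq:iter} to make sense, and I will assume this implicitly.)

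The main obstacle is proving $c_j<1$. My plan is to carry along in the induction an identity expressing Efron's ``redistribution to the right'', namely
\begin{equation*}
 m\,\hat S(s_{j-1})\,(1-c_j)=\rho_j ,
\end{equation*}
where $\rho_j=m\,e(s_j^-)$ is the number at risk at $s_j$ and $\hat S(s_0)\as1$. The right-hand side is positive because $s_j<t_m$. To propagate the identity, I use the fixed-point relation at $s_j$, $\hat S(s_j)(1-c_j)=e(s_j)+\zeta(s_j)/m$, together with $c_{j+1}=c_j+\zeta(s_j)/(m\hat S(s_j))$. These give
\begin{equation*}
 m\hat S(s_j)(1-c_{j+1})=m\,e(s_j)=\rho_{j+1},
\end{equation*}
which is the identity at the next index.

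Dividing the fixed-point relation by the identity at index $j$ then yields
\begin{equation*}
 \frac{\hat S(s_j)}{\hat S(s_{j-1})}=\frac{m\,e(s_j)+\zeta(s_j)}{\rho_j}=1-\frac{\eta(s_j)}{\rho_j}.
\end{equation*}
Multiplying these ratios telescopically recovers the product formula, so the limit coincides with $\hat S^{\mathrm{KM}}$ on $(-\infty,t_m)$, in the spirit of Efron's Corollary~7.1.
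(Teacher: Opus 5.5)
There is a genuine gap in your reduction step, and with it the claimed equivalence between uniform convergence on $(-\infty,t_m)$ and convergence at $s_1,\dots,s_p$ fails. The iterates $\hat{S}_k$, $k\ge 2$, are \emph{not} constant between consecutive observed times. As you note yourself, the right-hand side of~\eqref{eq:iter} depends on $t$ through the factor $\hat{S}_k(t)$, and this factor passes the $t$-dependence of the arbitrary starting function on to every later iterate. For example, take $m=2$, $t_1<t_2$, $\delta_1=0$ and $\hat{S}_1(t_1)=1$. Then $\hat{S}_k(t_1)=1$ and $\hat{S}_k(t)=1-2^{1-k}\bigl(1-\hat{S}_1(t)\bigr)$ for $t\in(t_1,t_2)$, which is non-constant for every $k$ whenever $\hat{S}_1$ is strictly decreasing there. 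Piecewise constancy holds for the fixed point, not for the iterates. So your induction only controls $\hat{S}_k$ on $(-\infty,s_1)$ and at the points $s_j$. It proves nothing on the open intervals $(s_j,s_{j+1})$ and $(s_p,t_m)$: neither convergence, nor uniformity, nor the identification with $\hat{S}^{\mathrm{KM}}$.

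The repair is the paper's Step~2b. For $t\in(s_j,s_{j+1})$, with $s_{p+1}\as t_m$, one has $\hat{S}_{k+1}(t)=e(s_j)+\bigl(c_k+\zeta(s_j)/(m\hat{S}_k(s_j))\bigr)\hat{S}_k(t)$. This coefficient is exactly the coefficient of your recursion at $s_{j+1}$ and depends only on values at $s_1,\dots,s_j$. By your propagated identity it tends to $c_{j+1}<1$; this also holds for $j=p$, where $\rho_{p+1}=m\,e(s_p)>0$. The same contraction estimate, which only uses $\hat{S}_k(t)\in[0,1]$, then gives convergence to $e(s_j)/(1-c_{j+1})=\hat{S}(s_j)$ uniformly in $t$.

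With this added, your argument is sound, and it differs from the paper at one point. The paper gets $b<1$ by contradiction with boundedness: if $b\ge 1$, the increments $\hat{S}_{k+1}(t_n)-\hat{S}_k(t_n)$ eventually exceed $a/2>0$. You instead get $1-c_j=\rho_j/(m\hat{S}(s_{j-1}))>0$ directly from the redistribution identity. Your route is constructive and makes explicit the identity $\hat{S}\bigl(t_{\un-1}\bigr)=\frac{m-n+\zeta_n+\eta_n}{m(1-b)}$, which the paper's Step~6 uses without comment. The paper's route needs no bookkeeping beyond boundedness, but leaves that identity to the reader.
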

\begin{proof}
Given $n\in\{1,\dots, m\},$ denote
\begin{align*}
 \un & \as \min\l\{i\in\{1,\dots,m\}\col t_i = t_n  \r\} ,\\
 \overline{n} & \as \max\l\{i\in\{1,\dots,m\}\col t_i = t_n  \r\} .
\end{align*}
For the reader's convenience, we divide the proof into several steps. 

\textbf{Step 1:} Observe that $\hat{S}_k$ is a survival function for each $k\in\nat.$ We proceed by induction. The starting function~$\hat{S}_1$ is a survival function and assume that $\hat{S}_k$ is a survival function for some fixed $k\in\nat.$ Then~$\hat{S}_{k+1}$ is a survival function, too. Indeed, it is easy to see that~$\hat{S}_{k+1}(t)=1$ for $t\in\l(-\infty,t_1\r),$ and that $\hat{S}_{k+1}$ is a non-increasing right-continuous function.

\textbf{Step 2:} Let us now prove that the sequence $\l(\hat{S}_k(t)\r)$ converges, as $k\to\infty,$ for each $t\in\l(-\infty,t_m\r).$ If $t\in\l(-\infty,t_1\r),$ then $\hat{S}_k(t)=e(t)=1$ for each $k\in\nat.$ If $m=1,$ we are done. Let also $m>1$ and assume for some $n\in\{1,\dots,m-1\}$ the sequence $\l(\hat{S}_k(t)\r)$ converges for each $t\in\l(-\infty,t_n\r).$ Without loss of generality we may and do assume $n=\ol{n}.$ 

\textbf{Step 2a:} We will show that the sequence $\l(\hat{S}_k\l(t_n\r)\r)$ converges, as $k\to\infty.$ To this end observe that each new iteration $\hat{S}_{k+1}\l(t_n\r)$ depends only on the values of $\hat{S}_k$ at $t_1,\dots, t_n.$ Set therefore
\begin{equation} \label{eq:bk-notation}
b_k\as \rec{m} \sum_{\substack{t_i< t_n \\ \delta_i=0}}\rec{\hat{S}_k\l(t_i\r)},\qquad\text{for each } k\in\nat.                                                                                                                                                                                                                                                                                                                                                                                                                                                                \end{equation}
Here we use the convention that an ``empty sum'' is zero, that is, if $\delta_i=1$ for every $i$ with $t_i< t_n,$ then $b_k=0.$ With this notation the iterative scheme~\eqref{eq:iter} reads
\begin{equation} \label{eq:iter-bk}
 \hat{S}_{k+1} \l(t_n\r) = e\l(t_n\r) +\frac{\zeta_n}{m} + b_k \hat{S}_k \l(t_n\r), 
\end{equation}
for each $k\in\nat.$ Set $a\as e\l(t_n\r)+\frac{\zeta_n}{m}=\frac{m-n}{m}+\frac{\zeta_n}{m}.$ Observe that $a>0,$ since $n<m.$ Finally, set $s_k \as \hat{S}_k\l(t_n\r).$ With this notation at hand, one can write \eqref{eq:iter-bk} compactly as
\begin{equation*}
 s_{k+1}= a + b_k s_k.
\end{equation*}

By the assumption above, the sequence $\l(b_k\r)$ converges to some $b\in(0,\infty),$ as $k\to\infty.$ If $b>1,$ then
\begin{equation*}
 s_{k+1}-s_k=a+ \l(b_k-1\r)s_k \ge a,\qquad \text{for } k \text{ big enough,}
\end{equation*}
which contradicts the boundedness of $\l(s_k\r).$ Likewise, if $b=1,$ then
\begin{equation} \label{eq:adhoc}
 s_{k+1}-s_k=a+ \l(b_k-1\r)s_k \to a,\qquad \text{as } k\to\infty,
\end{equation}
since $0\le s_k\le 1,$ for every $k\in\nat.$ But \eqref{eq:adhoc} actually contradicts again the boundedness of the sequence~$\l(s_k\r).$ 

We therefore conclude $b<1$ and set $c\as\frac{a}{1-b}>0.$ We will show that the sequence $\l(s_k\r)$ converges to~$c.$ Given $\eps>0,$ we want to find a $p_0\in\nat$ such that $\l|s_p-c \r|<\eps$ for every $p\ge p_0.$ To this end, consider
\begin{equation*}
 s_{k+1}-c = a + b_k s_k - c = c(1-b) + b_k s_k - c = b_k\l(s_k-c\r) + \l(b_k-b\r)c, 
 \end{equation*}
 and estimate
 \begin{equation} \label{eq:estim1}
 \l|s_{k+1}-c \r| \le b_k \l|s_k-c\r| + \l|b_k-b\r|c, 
\end{equation}
for every $k \in\nat.$ Since the sequence $\l(b_k\r)$ converges to $b<1,$ there exist a constant $r\in (b,1)$ and an index $k_0\in\nat$ such that simultaneously 
\begin{equation*}
b_k<r,\qquad r^k(c+1)<\hlf{\eps},\qquad \l|b_k-b\r|c<\hlf{\eps}(1-r),
\end{equation*}
for every $k\ge k_0.$ Then we claim that $p_0\as 2k_0$ is the desired index. Indeed, let $p\ge p_0.$ Then there exist $k\ge k_0$ and $l\ge k_0$ such that $p=k+l.$ By iterating the estimate~\eqref{eq:estim1}, we obtain
\begin{align*}
 \l|s_{k+1}-c \r| & \le r \l|s_k-c\r| + \l|b_k-b\r|c \\ \l|s_{k+2}-c \r| & \le r^2 \l|s_k-c\r| + r \l|b_k-b\r|c + \l|b_{k+1}-b\r|c \\ \vdots \\
 \l|s_{k+l}-c \r| & \le r^l \l|s_k-c\r| + \sum_{j=0}^{l-1} r^{l-1-j}\l|b_{k+j}-b\r|c.
\end{align*}

Hence, using the fact that $s_k \in [0,1],$ we get
\begin{equation*}
 \l|s_p -c \r| = \l|s_{k+l}-c \r| \le r^l (c+1) +\sum_{j=0}^{l-1} r^{l-1-j}\hlf{\eps}(1-r) \le\hlf{\eps} + \hlf{\eps}(1-r) \sum_{j=0}^{l-1} r^j \le\hlf{\eps} + \hlf{\eps}(1-r) \sum_{j=0}^\infty r^j =\eps,
\end{equation*}
and this shows that the sequence $\l(\hat{S}_k\l(t_n\r)\r)$ converges, as $k\to\infty.$

\textbf{Step 2b:} Next we show that the sequence $\l(\hat{S}_k(t)\r)$ converges for each $t\in\l(t_n,t_{n+1}\r).$ Note that the empirical survival function $e$ is obviously constant on this interval, namely $e(t)=\frac{m-n}{m},$ for every $t\in\l(t_n,t_{n+1}\r).$ We keep using the notation~\eqref{eq:bk-notation}. By the definition of the iterative algorithm~\eqref{eq:iter} we have
\begin{equation*} 
 \hat{S}_{k+1}(t) = e(t) + \l(\frac{\zeta_n}{m}\rec{\hat{S}_k\l(t_n\r)}+ \rec{m} \sum_{\substack{t_i< t_n \\ \delta_i=0}}\rec{\hat{S}_k\l(t_i\r)} \r) \hat{S}_k(t) = \frac{m-n}{m} + \l(\frac{\zeta_n}{m}\rec{\hat{S}_k\l(t_n\r)}+ b_k \r) \hat{S}_k(t) .
\end{equation*}
We already know by Step 2a that
\begin{equation*}
 \frac{\zeta_n}{m}\rec{\hat{S}_k\l(t_n\r)}+ b_k \to\frac{\zeta_n}{m} \frac{1-b}{\frac{m-n+\zeta_n}{m}}+b,\qquad \text{as } k\to\infty.
\end{equation*}
One can therefore repeat Step 2a again with $t$ in place of $t_n$ and conclude that $\l(\hat{S}_k(t)\r)$ converges to
\begin{equation} \label{eq:adhoc-limit2}
 \frac{m-n}{m}\rec{1-\l(\frac{(1-b)\zeta_n}{m-n+\zeta_n}+b\r)} = \frac{m-n+\zeta_n}{m}\rec{1-b}.
\end{equation}
In summary, assuming that the sequence $\l(\hat{S}_k\r)$ converges on $\l(-\infty,t_n\r),$ we got its convergence $\l[t_n,t_{n+1}\r).$

\textbf{Step 3:} We can proceed inductively and prove the convergence of the sequence $\l(\hat{S}_k\r)$ on $\l(-\infty,t_m\r).$ 

By inspecting the above arguments, one can see that the convergence is even \emph{uniform} in $t\in\l(-\infty,t_m\r),$ since on each of the intervals $\l(t_n,t_{n+1}\r)$ the convergence was independent of~$t.$

\textbf{Step 4:}  It follows immediately that the limit function, which we denote by $\hat{S},$ is a (piece-wise constant) non-increasing function. We now show that it is right-continuous. To this end, consider an index $n\in\{1,\dots, m-1\},$ without loss of generality $n=\on,$ and compare the value $\hat{S}\l(t_n\r)$ with $\hat{S}\l(t\r),$ where $t\in \l(t_n,t_{n+1}\r).$ Indeed, $\hat{S}\l(t_n\r)=\frac{m-n+\zeta_n}{m}\rec{1-b}$ by Step 2a and using~\eqref{eq:adhoc-limit2} we arrive at $\hat{S}\l(t_n\r)=\hat{S}(t),$ for every $t\in \l(t_n,t_{n+1}\r),$ implying the function~$\hat{S}$ is right-continuous at~$t_n.$ It is straightforward that the limit function is a~fixed point of~\eqref{eq:self}, and we have hence established the existence of a self-consistent estimator.

\textbf{Step 5:} The uniqueness follows by the fact that the limit of the sequence $\l(\hat{S}_k(t)\r)$ is equal to~\eqref{eq:adhoc-limit2}, in particular, it is independent of the starting point $\hat{S}_1(t).$

\textbf{Step 6:} We show that the self-consistent estimator coincides with the Kaplan--Meier estimator. For $t\in\l(-\infty,t_1\r)$ it is obviously true, since $\hat{S}(t)=\hat{S}^\textrm{KM}(t)=1.$ Let $n\in\{1,\dots, m-1\}$ and without loss of generality assume $n=\on.$ Then we use recursion:
\begin{align*}
 \hat{S}\l(t_n\r) &  =\frac{m-n+\zeta_n}{m} \rec{1-\rec{m}\sum_{\substack{t_i< t_n \\ \delta_i=0}}\rec{\hat{S}\l(t_i\r)}} \\ & = \frac{m-n+\zeta_n}{m-n+\zeta_n+\eta_n} \frac{m-n+\zeta_n+\eta_n}{m} \rec{1-\rec{m}\sum_{\substack{t_i< t_n \\ \delta_i=0}}\rec{\hat{S}\l(t_i\r)}} \\  & = \l(1-\frac{\eta_n}{m-n+\zeta_n+\eta_n}\r)  \hat{S}\l(t_{\un-1}\r) \\ & = \l(1-\frac{\eta_n}{m-n+\zeta_n+\eta_n}\r) \hat{S}^\textrm{KM}\l(t_{\un-1}\r) \\ & = \hat{S}^\textrm{KM}\l(t_n\r).
\end{align*}
For $t\in\l(t_n,t_{n+1}\r),$ we have $\hat{S}(t)=\hat{S}\l(t_n\r)=\hat{S}^\textrm{KM}\l(t_n\r)=\hat{S}^\textrm{KM}(t),$ which shows that the self-consistent estimator $\hat{S}$ coincides with the Kaplan--Meier estimator $\hat{S}^\textrm{KM}$ on $\l(-\infty,t_m\r).$ 

This completes the proof.
\end{proof}

\begin{exa}[Convergence issues]
 The convergence on $\l[t_m,\infty\r)$ to the Kaplan--Meier estimate does not necessarily hold if $m>1.$ To see that, consider $m=2$ and $t_1<t_2$ with $\delta_1=1$ and $\delta_2=0.$ Then for $t\in\l[t_2,\infty\r)$ we have $e(t)=0$ and the Kaplan--Meier estimator $\hat{S}^\textrm{KM}(t)=\half.$ 
 
On the other hand $\hat{S}_k\l(t_2\r)=\rec{2^{k-1}}\to0$ as $k\to\infty.$ And likewise for $t\in\l(t_2,\infty\r)$ we have $\hat{S}_k\l(t\r)=\frac{\hat{S}_1(t)}{2^{k-1}}\to0$ as $k\to\infty.$
\end{exa}

\begin{exa}[Convergence issues]
 One may think that $\l(\hat{S}_k\r)\l(t_m\r)$ would always converge to zero as $k\to\infty$ (like in Efron's paper~\cite[(7.7)]{efron}) and not to the Kaplan--Meier estimator. However this is not true either. To see that, consider $m=2$  and $t_1<t_2$ with $\delta_1=\delta_2=0.$ Then
\begin{equation*}
 \hat{S}_{k+1}\l(t_2\r) =\half\l(\frac{\hat{S}_k\l(t_2\r)}{\hat{S}_k\l(t_1\r)}+\frac{\hat{S}_k\l(t_2\r)}{\hat{S}_k\l(t_2\r)}  \r)=\half\l(\hat{S}_k\l(t_2\r)+1  \r),
\end{equation*}
 and it is easy to see that $\hat{S}_k\l(t_2\r)\to 1 = \hat{S}^\textrm{KM}\l(t_2\r).$
\end{exa}

\subsection*{Acknowledgements}
I would like to thank Rob Strawderman for a discussion on the paper by Mykland and Ren \cite{mykland-ren}.

\bibliographystyle{siam}
\bibliography{selfconsistency}

\end{document}